\newcommand{\R}{\ensuremath{ {\mathbb R} }}
\newcommand{\Z}{\ensuremath{ {\mathbb Z} }}
\newcommand{\bX}{\ensuremath{ \bar{X}} }
\newcommand{\bt}{\ensuremath{ \bar{\theta}} }
\newcommand{\nab}[3]{\ensuremath{ \left< \nabla_{\bX_{#1}}\bX_{#2},\bX_{#3} \right>}}
\newcommand{\na}[1]{\ensuremath{ \nabla_{\bX_{#1}}}}
\newcommand{\koszr}[3]{\ensuremath{\left< \left[ #1,#2 \right],#3 \right>-\left< \left[ #2,#3 \right],#1 \right> + \left< \left[ #3,#1 \right],#2 \right> }}
\newcommand{\Rm}[3]{\ensuremath{\nabla_{#1}\nabla_{#2}#3 - \nabla_{#2}\nabla_{#1}#3 - \nabla_{\left[ #1,#2 \right]}#3}}
\DeclareMathOperator{\tr}{tr}
\newtheorem{thm}{Theorem}
\newtheorem{prop}[thm]{Proposition}
\newtheorem{lem}[thm]{Lemma}
\theoremstyle{definition}
\newtheorem{defn}{Definition}
\def\urltilde{\kern -.15em\lower .7ex\hbox{\~{}}\kern .04em}
\author{Christopher Godbout}
\title{Chern-Simons classes and the Ricci flow on 3-manifolds}
\thanks{This work is part of the author's dissertation at Lehigh University. The author wishes to thank his advisor, David Johnson, for his help and insight.}
\date{\today}
\begin{document}
	\begin{abstract}
		In 1974, S.-S. Chern and J. Simons published a paper where they defined a new type of characteristic class - one that depends not just on the topology of a manifold but also on the geometry. The goal of this paper is to investigate what kinds of geometric information is contained in these classes by studying their behavior under the Ricci flow. In particular, it is shown that the Chern-Simons class corresponding to the first Pontryagin class is invariant under the Ricci flow on the warped products $S^2\times_f S^1$ and $S^1 \times_f S^2$ but that this class is not invariant under the Ricci flow on a generalized Berger sphere. 
	\end{abstract}
	\maketitle
	\nocite{Scott1983}

	\section{Introduction}
	cIn 1974, S.-S. Chern and J. Simons introduced a collection of secondary characteristic classes on a Riemannian manifold, $M$, which reflect some of the geometric structure of the manifold in addition to its topological structure. In particular, if $M$ is a compact 3-manifold,  the secondary characteristic class $\left\{\widetilde{TP_1\left(\Omega\right)}\right\}\in H^3\left( M,\R/\Z \right)$ is invariant under a conformal change in the metric. So metrics with different Chern-Simons classes will have different conformal structures. However, it is still unclear in general which geometric properties of a Riemannian manifold are reflected in these classes.

	The present work is a step in seeing what sort of geometric information can be determined by these classes. For a 3-manifold, the Ricci curvature flow deforms the metric of the manifold towards the standard geometries of Thurston's geometrization program. If it were true that the Chern-Simons class of 3-manifolds (specifically corresponding to the first Pontryagin form) were invariant under the Ricci flow, then perhaps these classes would reflect something of this geometric structure. See \cite{Scott1983} for a detailed treatment on the different geometries.

	However, it will be shown that this is not so in general. For a warped product of the form $S^2 \times_f S^1$, the Chern-Simons class is invariant as this flows under the Ricci flow towards the geometry of the product $S^2 \times S^1$. However, for a generalized Berger sphere, which for generic choices would flow towards a spherical geometry, the Chern-Simons class is not invariant.

	Let $G$ be a Lie group with Lie algebra $\mathfrak g$. Let $M^n$ be a smooth $n$-dimensional manifold with $E\to M$ a principal $G$-bundle. Denote its connection and curvature forms by $\omega$ and $\Omega$, respectively. Additionally, let $\left<\theta^1,\theta^2,\dots,\theta^n\right>$ be an orthonormal coframe.

	Then
	\begin{align}
		R_{ijk\ell} =& g_{\ell m}{R_{ijk}}^m\\
		\omega_i^j =& \Gamma_{ik}^j \theta^k\\
		\Omega_i^j =& \frac{1}{2} {R_{k\ell i}}^j \theta^i \wedge \theta^j
	\end{align}

	\begin{defn}
		\label{dfn:cspoly}
		A \emph{polynomial of degree $\ell$}, $P$, is a symmetric and multilinear map $P: {\mathfrak g}^\ell \to \R$. The Lie group $G$ acts on $\mathfrak g$ by inner automorphism. If the polynomial is invariant under this action, then it is said to be an \emph{invariant polynomial}. The set of invariant polynomials of degree $\ell$ is denoted $I^\ell(G)$.
	\end{defn}

	For $G = Gl_n(\R)$, the first Pontryagin polynomial is denoted $P_1 \in I^2(G)$ and is defined by

	\begin{equation}
		\label{eqn:p1}
		P_1(A\otimes B) = \frac{1}{2\pi^2}\left( \tr A \tr B - \tr \left( AB \right)  \right)
	\end{equation}

	\begin{defn}
		\label{dfn:csform}
		For $P\in I^\ell(G)$, define the 2-form $\varphi_t = t\Omega + \frac{1}{2}\left( t^2-t \right)\left[ \omega,\omega \right]$. The \emph{Chern-Simons form of P} is defined to be 
		\begin{equation}
			\label{eqn:TP}
			TP\left( \omega \right) = \ell\int_0^1 P\left( \omega\wedge\varphi_t^{\ell-1} \right)dt.
		\end{equation}
		Since $\varphi_t$ is a 2-form, order is irrelevant. The form $TP\left( \omega \right)$ is a $\left(2\ell-1\right)$-form on $E$ and Chern and Simons showed in \cite[Prop. 3.2]{Chern1974} that $dTP\left(\omega\right)=P\left(\Omega\right)$.
	\end{defn}

	Because of this, if $P\left(\Omega\right) = 0$, then the form is closed and so defines a cohomology class in $H^{2\ell-1}\left(E,\R\right)$. Denote this class $\left\{TP\left(\omega\right)\right\}$. Theorem 3.9 of \cite{Chern1974} shows that if $\dim M = n$ and $2\ell-1 = n$, then $TP\left(\omega\right)$ is closed and $\left\{TP\left(\omega\right)\right\}\in H^n\left(E,\R\right)$ depends on $\omega$. In this paper,  it will usually be the case that $\ell = 2$ and so $2\ell-1=3=\dim M$. Let $\sim$ represent the reduction of a cohomology class mod $\Z$. Theorem 3.16 of \cite{Chern1974} shows that there is a well-defined cohomology class $\left\{\widetilde{TP\left(\omega\right)}\right\}\in H^{2\ell-1} \left(M, \R/\Z\right)$ whose lift is $\left\{\widetilde{TP(\left(\omega\right)}\right\}\in H^{2\ell-1} \left(E(M),\R/\Z\right)$.

	The following result is proved in \cite[Prop 3.8]{Chern1974}.

	\begin{prop}
		\label{prop:tpd}
		Let $\omega(s)$ be a smooth 1-parameter family of connections on $E\to M$ with $s\in \left[ 0,1 \right]$. Set $\omega = \omega(0)$ and $\dot{\omega} = \dfrac{d}{ds} \omega(s) \mid_{s=0}$. For $P\in I^\ell(G)$,
		\begin{equation}
			\label{eqn:tpd}
			\dfrac{d}{ds}TP\left( \omega(s) \right)\mid_{s=0} = \ell P\left( \dot{\omega} \wedge \Omega^{\ell-1} \right) + exact.
		\end{equation}
	\end{prop}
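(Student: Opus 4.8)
The plan is to avoid differentiating the defining integral head‑on and instead leverage the already‑cited identity $d\,TP(\omega)=P(\Omega)$ on an enlarged bundle. Consider the product principal $G$‑bundle $E\times[0,1]\to M\times[0,1]$ with parameter coordinate $s$, and let $\hat\omega$ be the connection whose restriction to each slice $E\times\{s\}$ is $\omega(s)$ and which carries no $ds$‑component. One checks that $\hat\omega$ is a genuine connection form: it reproduces each $\xi$ on the fundamental vector fields (these are tangent to the $E$‑factor) and is equivariant because every $\omega(s)$ is. Writing $\hat d = d + ds\wedge\partial_s$ for the exterior derivative on the product and using the structure equation, its curvature splits as $\hat\Omega = \Omega(s) + ds\wedge\dot\omega(s)$, where $\Omega(s)$ is the slicewise curvature and $\dot\omega(s)=\partial_s\omega(s)$. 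By Prop.~3.2 of \cite{Chern1974}, applied on the product, $\hat d\,TP(\hat\omega)=P(\hat\Omega)$.

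Next I would extract the $ds$‑component of this identity. Every form on $E\times[0,1]$ decomposes uniquely as $\alpha=\alpha_0+ds\wedge\alpha_1$ with $\alpha_0,\alpha_1$ free of $ds$, and a short computation gives that the $ds$‑part of $\hat d\alpha$ is $ds\wedge\left(\partial_s\alpha_0 - d\alpha_1\right)$. Applying this to $\alpha=TP(\hat\omega)$, and observing that its $ds$‑free part is exactly $TP(\omega(s))$ (since $TP$ is assembled pointwise from $\hat\omega$ and $\hat\Omega$, whose $ds$‑free parts are $\omega(s)$ and $\Omega(s)$), the $ds$‑component of the left side becomes $\partial_s TP(\omega(s)) - d\beta(s)$, where $\beta(s)$ is defined by $TP(\hat\omega)=TP(\omega(s))+ds\wedge\beta(s)$.

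For the right side, multilinearity of $P$ together with $ds\wedge ds=0$ shows that in $P\big((\Omega(s)+ds\wedge\dot\omega)^{\ell}\big)$ only the $\ell$ terms carrying a single factor $ds\wedge\dot\omega$ contribute to the $ds$‑component, so the $ds$‑part of $P(\hat\Omega)$ is $\ell\,P(\dot\omega\wedge\Omega(s)^{\ell-1})$. Matching $ds$‑components and setting $s=0$ yields
\[
\frac{d}{ds}TP(\omega(s))\Big|_{s=0} = \ell\,P\big(\dot\omega\wedge\Omega^{\ell-1}\big) + d\big(\beta(0)\big),
\]
which is the assertion, with the exact term realized explicitly as $d\beta(0)$.

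The main obstacle here is bookkeeping rather than conceptual: one must confirm carefully that $\hat\omega$ is an honest connection so that Prop.~3.2 is legitimately available on the product, and that restricting to $ds$‑free parts commutes with the algebraic construction of $TP$, so that the $ds$‑free part of $TP(\hat\omega)$ really is $TP(\omega(s))$. A more computational alternative differentiates \eqref{eqn:TP} directly under the integral, uses $\dot\Omega = d\dot\omega+[\omega,\dot\omega]$ to compute $\dot\varphi_t$, and then combines the fundamental theorem of calculus in $t$ (via $\varphi_0=0$, $\varphi_1=\Omega$) with the infinitesimal $\mathrm{Ad}$‑invariance of $P$ to cancel the surviving commutator terms; in the case $\ell=2$ relevant to this paper that route is quick and identifies the exact term as $-d\,P(\omega,\dot\omega)$.
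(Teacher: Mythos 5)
Your argument is correct, but note that the paper does not prove this proposition at all: it is quoted verbatim from \cite[Prop.\ 3.8]{Chern1974}, so there is no in-paper proof to compare against. What you have written is essentially the classical argument for that result --- pass to the product bundle $E\times[0,1]\to M\times[0,1]$, observe that the slicewise connection $\hat\omega$ has curvature $\hat\Omega=\Omega(s)+ds\wedge\dot\omega(s)$, apply $\hat d\,TP(\hat\omega)=P(\hat\Omega)$ there, and equate $ds$-components --- and each step checks out: $\hat\omega$ is a genuine connection, the $ds$-free part of $TP(\hat\omega)$ is indeed $TP(\omega(s))$ by multilinearity of $P$, and only the $\ell$ single-$ds$ terms survive in $P(\hat\Omega)$, which identifies the exact term as $d\beta(0)$ with $\beta$ the $ds$-contraction of $TP(\hat\omega)$. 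Your closing remark is also accurate: for $\ell=2$, the only case used in this paper, direct differentiation under the integral in \eqref{eqn:TP} gives the same conclusion with the exact term written out explicitly, which would make the paper more self-contained than a bare citation.
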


	Given a 1-parameter family of metrics, $g(t)$, on a Riemannian $M^n$, defined on some interval $I\subset \R$, Richard Hamilton , in \cite[p. 259]{Hamilton1982}, defined the Ricci flow equation as
	\begin{equation}
		\left\{\begin{array}{rl}
				\frac{\partial}{\partial t} g_{ij}(t) &= -2R_{ij}(t) \\
				g(0) &= g_0.
			\end{array}\right.
		\end{equation}

		Hamilton proved the following important theorem in corollaries 17.10 and 17.11 of  \cite{Hamilton1982}.
		\begin{thm}[Hamilton, 1982]
			Let $\left(M^3,g_0\right)$ be a closed Riemannian 3-manifold with positive Ricci curvature. Then there exists a unique solution, $g(t)$, of the normalized Ricci flow with $g(0)=g_0$ for all $t\geq 0$. Furthermore, as $t\to\infty$, the metrics $g(t)$ converge exponentially fast in every $C^m$-norm to a $C^\infty$ metric $g_\infty$ with constant positive sectional curvature.
		\end{thm}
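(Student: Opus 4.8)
The plan is to follow Hamilton's original strategy, which breaks into four stages: short-time existence, curvature evolution with preservation of positivity, a pinching estimate forcing the metric toward an Einstein metric, and long-time existence with exponential convergence for the normalized flow.

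First I would establish short-time existence and uniqueness of the unnormalized flow $\partial_t g = -2\rc$ on a maximal interval $[0,T)$. The subtlety is that this system fails to be strictly parabolic, owing to its invariance under the diffeomorphism group, so standard parabolic theory does not apply directly. I would resolve this via the DeTurck trick: couple the flow to a fixed background connection to obtain a strictly parabolic Ricci-DeTurck system whose solution exists and is unique on a short interval, then pull back by the flow of a time-dependent vector field to recover a genuine solution of the Ricci flow, with uniqueness following by reversing the gauge fixing.

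Next I would derive the reaction-diffusion evolution equations for the curvature. In dimension three the full Riemann tensor is algebraically determined by the Ricci tensor, so the flow reduces to a system for the eigenvalues $\lambda \ge \mu \ge \nu$ of the curvature operator, whose evolution has the schematic form $\partial_t = \Delta + Q$ with $Q$ a quadratic reaction term. Applying Hamilton's maximum principle for symmetric tensors (the tensor analogue of the scalar maximum principle, in which one checks that $Q$ points into the relevant invariant cone along its boundary), I would show that the cone of positive Ricci curvature is preserved and, more sharply, that the normalized gaps $(\lambda-\mu)/\lambda$ and $(\mu-\nu)/\lambda$ are driven to zero as the curvature grows. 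This pinching estimate, asserting that the eigenvalues coalesce while the curvature blows up, is the technical heart of the theorem and the step I expect to be the main obstacle, since it requires exploiting the precise algebraic structure of $Q$ in three dimensions rather than merely its sign.

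Finally I would pass to the normalized flow $\partial_t g = -2\rc + \tfrac{2}{3}\bar r\, g$, which fixes the volume and hence exists for all $t \ge 0$, even though the unnormalized flow contracts to a point in finite time. Combining the pinching estimate with interpolation bounds (of Bernstein--Bando--Shi type) on all covariant derivatives of the curvature, I would show that the trace-free Ricci tensor decays exponentially, that the rescaled metrics stay uniformly equivalent, and therefore that $g(t)$ converges exponentially in every $C^m$-norm to a limit $g_\infty$. The vanishing of the trace-free Ricci tensor in the limit, together with the three-dimensional curvature identities, forces $g_\infty$ to have constant positive sectional curvature.
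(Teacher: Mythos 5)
The paper does not prove this theorem; it is quoted verbatim from Hamilton's 1982 paper (Corollaries 17.10 and 17.11 of the cited reference), so there is no internal proof to compare against. Your outline is a correct and standard road map of Hamilton's argument: short-time existence, the reduction of the curvature evolution to a reaction--diffusion system for the eigenvalues of the curvature operator, the tensor maximum principle preserving the positive-Ricci cone, the pinching estimate driving the eigenvalue gaps to zero, and exponential convergence of the volume-normalized flow. Two historical substitutions are worth flagging, though they do not affect correctness: Hamilton's original short-time existence proof used the Nash--Moser inverse function theorem (the DeTurck trick you invoke appeared only in 1983), and the derivative bounds he used were integral estimates rather than the later pointwise Bernstein--Bando--Shi estimates. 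As a proof this is of course only a skeleton --- the pinching estimate and the gradient-of-scalar-curvature estimate that upgrades pointwise pinching to convergence are each substantial arguments --- but as a summary of the strategy behind the cited result it is accurate.
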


		An excellent, in-depth treatment of the Ricci flow is \cite{Chow2006}.

		\section{Warped Products of Spheres}

		The simplest example of how the Chern-Simons classes are related to the Ricci flow is given by the round $n$-sphere. 

		\begin{prop}
			Let $M=S^{2\ell-1}$ with the standard metric and $P\in I^{2\ell-1}\left( Gl_n\left( \R \right) \right)$ be any invariant polynomial. Then the Chern-Simons class,
			\begin{equation*}
				\left\{ \widetilde{TP\left( \omega \right)} \right\}\in H^{2\ell-1} \left( M,\R/\Z \right),
			\end{equation*}is invariant under the Ricci flow.
		\end{prop}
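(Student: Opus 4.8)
The plan is to reduce everything to a single fact --- that the Levi--Civita connection of the round sphere is stationary under the flow --- and then to read off invariance of the class from Proposition~\ref{prop:tpd}. First I would note that the standard metric $g_0$ on $S^{2\ell-1}$ has constant sectional curvature and is therefore Einstein, $\rc(g_0)=\lambda g_0$ for a constant $\lambda>0$. This is precisely the positive-Ricci hypothesis of Hamilton's theorem, so a solution of the flow exists; but for an Einstein metric it can be written down by hand.

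Next I would solve the flow explicitly. Since the Ricci tensor is unchanged by constant rescalings of the metric, the ansatz $g(t)=c(t)\,g_0$ turns the unnormalized equation $\partial_t g=-2\rc$ into the scalar ODE $c'(t)=-2\lambda$, so $g(t)=(1-2\lambda t)\,g_0$; for the normalized flow the averaging term cancels $-2\rc$ identically and $g(t)\equiv g_0$. In either case $g(t)$ is a \emph{constant} homothety of $g_0$ at every time. I would then check that a constant homothety fixes the connection forms: writing $\tilde\theta^i=\sqrt{c}\,\theta^i$ for an orthonormal coframe of $c\,g_0$ and using that $c$ is constant, we have $d\tilde\theta^i=\sqrt{c}\,d\theta^i$, so the forms solving the first structure equations for $\tilde\theta$ are identical to those for $\theta$; by uniqueness of the Levi--Civita connection the two agree. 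Hence, under the canonical identification $e\mapsto c^{-1/2}e$ of the two orthonormal frame bundles, the connection $\omega(t)$ is independent of $t$ and $\dot\omega=0$.

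Finally I would apply Proposition~\ref{prop:tpd} to the family $\omega(t)$: writing $\ell$ for the degree of $P$, $\frac{d}{dt}\{TP(\omega(t))\}=\ell\,\{P(\dot\omega\wedge\Omega^{\ell-1})\}=0$, so the class $\{TP(\omega)\}$, and with it its mod-$\Z$ reduction in $H^{2\ell-1}(M,\R/\Z)$, is constant along the flow. In fact the computation of the second paragraph shows the Chern--Simons \emph{form} is unchanged, which is stronger than needed. I expect the only point requiring care to be the frame-bundle identification: rescaling the metric changes which frames are orthonormal, so one must verify that the canonical isomorphism between the two frame bundles pulls $\omega$ back to itself --- but this is exactly the structure-equation computation above, after which the conclusion is purely formal.
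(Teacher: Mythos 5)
Your proof is correct, and its second half takes a genuinely different route from the paper's. Both arguments rest on the same key fact: because the round metric is Einstein, the Ricci flow merely rescales it by a constant factor, so $g(t)$ is a constant homothety of $g_0$ for as long as the flow exists. The paper finishes in one line from there, observing that a homothety is in particular a conformal change and invoking the Chern--Simons result that these classes are conformal invariants. You instead verify directly that a constant homothety leaves the Levi--Civita connection forms unchanged under the canonical identification $e\mapsto c^{-1/2}e$ of the orthonormal frame bundles, so that $\dot\omega=0$ and in fact the Chern--Simons \emph{form}, not merely its class, is constant along the flow. Your route is more elementary and self-contained: it does not rely on the conformal-invariance theorem of \cite{Chern1974} (which concerns the mod-$\Z$ classes and requires real work for nonconstant conformal factors, and which the paper's introduction states only for $TP_1$ on $3$-manifolds, whereas the proposition is asserted for general $P$ and general odd dimension), and it yields the stronger conclusion that the $\R$-valued class on $E(M)$ is fixed. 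The frame-bundle identification you flag is indeed the one point needing care, and your structure-equation computation settles it; your explicit solution of both the normalized and unnormalized flows for an Einstein metric is also correct. The paper's proof is shorter and emphasizes the conceptual theme of conformal invariance, but yours proves more and assumes less.
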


		\begin{proof}
			The Ricci flow causes the sphere to shrink to a point in finite time. At every time, however, it is still a round sphere and so the Ricci flow is a conformal change in metric. Since the Chern-Simons classes are conformal invariants, they are therefore invariant under the Ricci flow.
		\end{proof}

		\begin{defn}
			Given manifolds $\left( M^m,g_M \right)$ and $\left( N^n,g_N \right)$ along with a smooth function $f: M \to \R^+$, the \emph{warped product} of $M$ with $N$, denoted $M \times_f N$ is defined by $\left( M\times N, g_M \oplus f g_N \right)$. The function $f$ will be referred to as the \emph{warping function}.
		\end{defn}

		Let $\left( M,g \right)$ be the warped product of $S^n$ with $S^m$ with $m+n=3$ and using the standard round metrics. Let $f:S^n\to \R^+$ be the warping function. Let $\left\{ \theta_i \right\}_{i=1}^n$ be the coordinates for $S^n$ and $\left\{ \theta_i \right\}_{i=n+1}^{m}$ be the coordinates for $S^m$. These are the standard spherical coordinates. For $S^1$,
		\begin{equation*}
			x_1 = \cos\theta^1, x_2 = \sin\theta^1.
		\end{equation*} For $S^2$,
		\begin{equation*}
			x_1 = \cos\theta^1, x_2 = \sin\theta^1\cos\theta^2, x_3 = \sin\theta^1 \sin\theta^2.
		\end{equation*}
		Thus $g$ is diagonal and
		\begin{equation}
			g_{ii} = \left\{ \begin{array}{rl}
					\prod_{j=1}^{i-1}\sin^2\theta^j& 1 \leq i \leq n\\
					f\prod_{j=n+1}^{i-1}\sin^2\theta^j & n+1 \leq i \leq n+m
				\end{array}\right.
			\end{equation}
			Formulas for the Christoffel symbols and curvature of the warped products can be found in \cite[pp. 209-211]{Oneill1983}.

			\begin{lem}
				\label{lem:0deriv}
				Given $n+m=3$ and $f:S^n \to \R^+$, let $M = S^n\times_f S^m$. For the first Pontryagin polynomial $P_1\in I^2\left( Gl_n\left( \R \right) \right)$, the derivative of $TP_1\left( \omega \right)$ is exact at time 0.
			\end{lem}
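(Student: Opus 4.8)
The plan is to combine Proposition~\ref{prop:tpd} with a symmetry argument, rather than attacking the warped-product curvature directly. Since the Ricci flow produces a smooth family of metrics $g(t)$ and hence a smooth family of Levi-Civita connections $\omega(t)$ on the (fixed) frame bundle, Proposition~\ref{prop:tpd} with $\ell=2$ gives
\[
\frac{d}{dt}TP_1\left(\omega(t)\right)\Big|_{t=0} = 2\,P_1\left(\dot{\omega}\wedge\Omega\right) + \mathrm{exact},
\]
so it suffices to show that the $3$-form $P_1(\dot{\omega}\wedge\Omega)$ is exact. By \eqref{eqn:p1},
\[
P_1\left(\dot{\omega}\wedge\Omega\right) = \frac{1}{2\pi^2}\left(\tr\dot{\omega}\wedge\tr\Omega - \tr\left(\dot{\omega}\wedge\Omega\right)\right).
\]
For the Levi-Civita connection the induced connection on $\det TM$ is flat, since the volume form is parallel; hence $\tr\Omega = 0$, the first term drops out, and $P_1(\dot{\omega}\wedge\Omega) = -\tfrac{1}{2\pi^2}\tr(\dot{\omega}\wedge\Omega)$.

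Next I would argue that $\tr(\dot{\omega}\wedge\Omega)$ is a well-defined global $3$-form on $M$ that is \emph{natural} in the metric. Both $\dot{\omega}$ and $\Omega$ are tensorial (endomorphism-valued) forms descending to $M$: $\Omega$ is the Riemann curvature, and $\dot{\omega}$ corresponds to the connection variation $\dot{\Gamma}_{ij}^k = \tfrac12 g^{k\ell}\left(\nabla_i\dot{g}_{j\ell} + \nabla_j\dot{g}_{i\ell} - \nabla_\ell\dot{g}_{ij}\right)$ with $\dot{g} = -2\rc(g)$. Under a change of frame both transform by conjugation, so the trace is frame-independent, and under any diffeomorphism $\phi$ one has $\phi^*\tr(\dot{\omega}\wedge\Omega) = \tr(\dot{\omega}\wedge\Omega)$ computed for $\phi^*g$, because curvature and the Ricci tensor are natural. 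In particular, for an isometry $\sigma$ of $(M,g)$ the identity $\sigma^*g=g$ forces $\sigma^*\dot{g} = -2\rc(\sigma^*g) = \dot{g}$, so $\sigma^*\left[\tr(\dot{\omega}\wedge\Omega)\right] = \tr(\dot{\omega}\wedge\Omega)$.

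The key point is that $M = S^n\times_f S^m$ admits an \emph{orientation-reversing} isometry of this kind. Since $f$ is a positive function on the $S^n$ factor alone, a reflection of the $S^m$ factor (in the case $S^1\times_f S^2$) or of the $S^1$ fiber, $\theta^3\mapsto-\theta^3$ (in the case $S^2\times_f S^1$), preserves the warped metric while reversing orientation. Calling this isometry $\sigma$ and using that a degree $-1$ map satisfies $\int_M\sigma^*\alpha = -\int_M\alpha$ for a top form $\alpha$, we obtain
\[
\int_M \tr\left(\dot{\omega}\wedge\Omega\right) = \int_M \sigma^*\left[\tr\left(\dot{\omega}\wedge\Omega\right)\right] = -\int_M \tr\left(\dot{\omega}\wedge\Omega\right),
\]
so the integral vanishes. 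As $M$ is a closed, connected, oriented $3$-manifold, $H^3(M,\R)\cong\R$ with the isomorphism given by integration, and every $3$-form is automatically closed; hence a $3$-form with zero integral is exact. Therefore $P_1(\dot{\omega}\wedge\Omega)$ is exact, and the derivative of $TP_1(\omega)$ is exact at time $0$.

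The main obstacle I anticipate is making the two structural claims fully rigorous: first, that $\tr(\dot{\omega}\wedge\Omega)$ really is a natural (diffeomorphism-equivariant) invariant so that the pullback identity holds --- this rests on the tensorial transformation of $\dot{\omega}$ as a derivative of connections and on the naturality of curvature and of $\rc$; and second, verifying carefully that the reflection is a genuine isometry of the round warped metric reversing the orientation of $M$. A purely computational alternative is available as a fallback --- writing out O'Neill's warped-product curvature forms and the Ricci tensor explicitly and checking that the $\theta^1\wedge\theta^2\wedge\theta^3$ coefficient of $\tr(\dot{\omega}\wedge\Omega)$ integrates to zero --- but the bookkeeping involving the Hessian of $f$ makes it considerably more laborious than the symmetry argument, which also transparently explains why the analogous statement should \emph{fail} once the orientation-reversing symmetry is lost, as for the generalized Berger sphere.
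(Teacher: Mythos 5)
Your argument is correct, but it takes a genuinely different route from the paper's. The paper proves the stronger pointwise statement $P_1\left(\dot\omega\wedge\Omega\right)=0$: it computes the Christoffel symbols, connection forms, and curvature forms of the warped product explicitly, observes that $\tr\Omega=0$, and checks that every diagonal entry of $\dot\omega\wedge\Omega$ contains a repeated $d\theta^i$, so $\tr\left(\dot\omega\wedge\Omega\right)$ vanishes identically. You instead show only that $\int_M\tr\left(\dot\omega\wedge\Omega\right)=0$, by combining the naturality of $\Omega$ and of $\dot\omega$ (the latter because $\dot g=-2\rc(g)$ is itself natural, so an isometry of $g$ preserves $\dot\omega$) with the orientation-reversing isometry obtained by reflecting the unwarped sphere factor, and then using that integration identifies $H^3\left(M,\R\right)$ with $\R$. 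This is weaker pointwise but fully sufficient for the lemma as stated, and it has real advantages: it is essentially computation-free, and since the Ricci flow preserves isometry groups the same reflection remains an isometry of $g(t)$ for all $t$, so your argument applies verbatim at every time --- which would let the subsequent theorem bypass the uniformization/conformal-equivalence step the paper uses to argue that the warped-product structure persists. It also explains conceptually why the Berger-sphere computation of Section 3 comes out nonzero: those metrics admit no orientation-reversing isometry. What the paper's explicit computation buys in exchange is the identity $P_1\left(\dot\omega\wedge\Omega\right)=0$ itself, which is local and independent of orientation and global topology. The two structural points you flag as obstacles (tensoriality and naturality of $\dot\omega$ as a derivative of connections, and the fact that the reflection is an isometry of $g_M\oplus f g_N$ reversing the orientation of $M\times N$) are both standard and hold exactly as you state them, so I see no genuine gap.
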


			\begin{proof}
				For $n=1$, the Christoffel symbols are computed using the standard formula $\Gamma_{ij}^k = \frac12 g^{k\ell}\left(\partial_i g_{j\ell} + \partial_j g_{i\ell} - \partial_\ell g_{ij}\right)$. From there, the connection 1-forms are computed using the formula $\omega_i^j = \Gamma_{ki}^jd\theta^k$. Finally, the curvature 2-forms are computed according to $\Omega_i^j = d\omega_i^j - \omega_i^p\wedge\omega_p^j$. 

				Based off of these computations along with the fact that $\Omega_i^j = {R_{pqi}}^j \theta^p \wedge \theta^q$,
				\begin{equation*}
					\Omega = \left(\begin{array}{ccc}
							0 & {R_{121}}^2 d\theta^1 \wedge d\theta^2 & {R_{131}}^3 d\theta^1 \wedge d\theta^3 \\
							{R_{122}}^1 d\theta^1 \wedge d\theta^2 & 0 & {R_{232}}^3 d\theta^2 \wedge d\theta^3\\
							{R_{133}}^1 d\theta^1 \wedge d\theta^3 & {R_{233}}^2 d\theta^2 \wedge d\theta^3 & 0
						\end{array}\right).
				\end{equation*}

				The variational formulas for the Christoffel symbols under the Ricci flow are
				\begin{equation}
					\label{eq:chrd}
					\partial_t \Gamma_{ij}^k = \frac{R_{i\ell}}{g_{ii}g_{\ell\ell}}\left(\partial_i g_{j\ell} + \partial_j g_{i\ell} - \partial_\ell g_{ij}\right) - g^{k\ell} \left(\partial_i R_{j\ell} + \partial_j R_{i\ell} - \partial_\ell R_{ii}\right)
				\end{equation}
				Using \eqref{eq:chrd} combined with the formula for $\Omega$, the derivative of $\omega$ is of the form
				\begin{equation*}
					\dot\omega = \left(\begin{array}{ccc}
							a_1^1 d\theta^1 & a_1^2 d\theta^2 & a_1^3 d\theta^3\\
							a_2^1 d\theta^2  & a_2^2 d\theta^1 & 0\\
							a_3^1 d\theta^1 & 0 & a_3^3 d\theta^3
						\end{array}\right).
				\end{equation*} Since the curvature form $\Omega$ has zeroes along the diagonal, $\tr\Omega=0$ and thus $\tr \dot{\omega} \tr\Omega =0$. The product $\dot{\omega}\wedge \Omega$ has the following diagonal

				\begin{align*}
					\left(\dot\omega \wedge \Omega\right)_1^1 =& \left(a_1^2 d\theta^2 \right) \wedge \left({R_{122}}^2 d\theta^1 \wedge d\theta^2\right) + \left( a_1^3 d\theta^3 \right) \wedge \left({R_{133}}^3 d\theta^1 d\theta^3 \right)\\
					=& 0\\
					\left(\dot\omega \wedge \Omega\right)_2^2 =& \left(a_2^1 d\theta^2\right) \wedge \left({R_{121}}^2 d\theta^1 \wedge d\theta^2\right)\\
					=& 0\\
					\left(\dot\omega \wedge \Omega\right)_3^3 =& \left(a_3^1 d\theta^1\right) \wedge \left({R_{131}}^3 d\theta^1 \wedge d\theta^3\right)\\
					=&0
				\end{align*} and so $\tr\left(\dot\omega \wedge \Omega\right) = 0$.

				For $n=2$, the same computations as above show that $\Omega$ is of the form
				\begin{equation*}
					\left( \begin{array}{ccc}
							0 & - d\theta^1 \wedge d\theta^2 & \scriptstyle{ {R_{131}}^3 d\theta^1 \wedge d\theta^3 + {R_{231}}^3 d\theta^2 \wedge d\theta^3}\\
							\sin^2\theta^1 d\theta^1 \wedge d\theta^2 & 0 & \scriptstyle{ {R_{132}}^3 d\theta^1 \wedge d\theta^3 + {R_{232}}^3 d\theta^2 \wedge d\theta^3}\\
							\scriptstyle{ {R_{133}}^1 d\theta^1 \wedge d\theta^3 + {R_{233}}^1 d\theta^2 \wedge d\theta^3} & \scriptstyle{ {R_{133}}^2 d\theta^1 \wedge d\theta^3 + {R_{233}}^2  d\theta^2 \wedge d\theta^3 }& 0
						\end{array}\right).
				\end{equation*}

				The derivative of the connection form is 
				\begin{equation*}
					\dot\omega = \left(\begin{array}{ccc}
							a_1^1 d\theta^1 + b_1^1 d\theta^2 & a_1^2 d\theta^1 +  b_1^2 d\theta^2 & c_1^3 d\theta^3\\
							a_2^1 d\theta^1  + b_2^1 d\theta^2 & a_2^2 d\theta^1 + b_2^2 d\theta^2 & c_2^3 d\theta^3\\
							c_3^1 d\theta^3 & c_3^2 d\theta^3 & a_3^3 d\theta^1 + b_3^3 d\theta^2
						\end{array}\right).
				\end{equation*}

				As in the case $n=1$, computation shows that both $\tr \dot{\omega} \tr \Omega=0$ and $\tr \left( \dot{\omega}\wedge \Omega \right)=0$. Therefore $P_1\left( \dot{\omega}\wedge\Omega \right)=0$ and $\dfrac{d}{ds}TP\left( \omega\left( s \right) \right)\mid_{s=0} = exact$.
			\end{proof}

			This lemma combined with the fact that the Ricci flow preserves isometries yields the following theorem.

			\begin{thm}
				If $m+n=3$ and $f:S^n\to \R^+$, then for the manifold $M = S^n \times_f S^m$ and the first Pontryagin polynomial $P_1 \in I^2\left( Gl_{3}\left( \R \right) \right)$ the cohomology class $\left\{ \widetilde{TP\left( \omega \right)} \right\}\in H^3 \left( M,\R/\Z \right)$ is invariant under the Ricci flow.
			\end{thm}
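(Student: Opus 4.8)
The plan is to promote the infinitesimal statement of Lemma~\ref{lem:0deriv}, which holds at time $0$, to a statement holding at every time along the flow, and then to integrate. Concretely, I would fix a time $t_0$, reparametrize the flow so that $s\mapsto g(t_0+s)$ starts at $g(t_0)$, and aim to show that $\frac{d}{ds}TP_1(\omega(t_0+s))\mid_{s=0}$ is exact for every $t_0$, not merely for $t_0=0$. By Proposition~\ref{prop:tpd} this reduces to checking $P_1(\dot\omega\wedge\Omega)=0$ at time $t_0$, and the vanishing computation in Lemma~\ref{lem:0deriv} depends only on $g(t_0)$ being a warped product $S^n\times_f S^m$ with round factors. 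So the whole theorem rests on showing that the flow does not leave this class of metrics.

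First I would establish the persistence of the warped-product structure using the stated fact that the Ricci flow preserves isometries. If $\phi\in\isom(M,g_0)$, then $\phi^*g(t)$ solves the Ricci flow with the same initial data $\phi^*g_0=g_0$, so uniqueness gives $\phi^*g(t)=g(t)$: every isometry of $g_0$ is an isometry of $g(t)$. For $M=S^n\times_f S^m$ the full isometry group of the round fiber $S^m$ acts fiberwise and fixes the base, so this group acts by isometries of $g(t)$ as well. I would then invoke a standard representation-theoretic argument: the fiberwise isotropy acts irreducibly on the fiber tangent space and trivially on the base tangent space with no common invariants, which forces $g(t)$ to be block diagonal with the fiber block a pointwise multiple of the round metric. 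This identifies $g(t)$ as a warped product $S^n\times_{f_t}S^m$ with round fiber and a time-dependent warping function $f_t$; when the base is $S^1$ it is automatically round, and when the base is $S^2$ I would use the residual base symmetry to keep it round.

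Granting this, the second step is immediate: the hypotheses of Lemma~\ref{lem:0deriv} hold for $g(t_0)$ for every $t_0$, so $P_1(\dot\omega\wedge\Omega)=0$ and $\frac{d}{ds}TP_1(\omega(t_0+s))\mid_{s=0}$ is exact for all $t_0$. The third step is to integrate: a smooth family of closed $3$-forms on $E$ whose $t$-derivative is always exact represents a constant de Rham class in $H^3(E,\R)$, so $\{TP_1(\omega(t))\}$ is independent of $t$; reducing mod $\Z$ shows the descended class $\{\widetilde{TP_1(\omega(t))}\}\in H^3(M,\R/\Z)$ is constant, i.e. invariant under the Ricci flow.

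The main obstacle is the first step, the persistence of the warped-product form under the flow, in particular keeping the base factor round. The isometry argument controls the fiber directions and kills cross terms cleanly, but for the $S^2$ base one must argue that rotational symmetry of the base is both present initially and preserved, so that the base metric stays a multiple of the round one and the explicit curvature normal forms used in Lemma~\ref{lem:0deriv} remain valid at each time; everything downstream is then a direct reapplication of the earlier computation.
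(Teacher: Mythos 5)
Your overall skeleton (show the $t$-derivative of $TP_1(\omega(t))$ is exact at every time, then integrate) and your treatment of the fiber coincide with the paper's proof: the paper likewise uses persistence of isometries under the flow, so the $SO(m)$-action with isotropy $SO(m-1)$ forces each fiber of $\pi:S^n\times_f S^m\to S^n$ to remain a round $m$-sphere of some radius depending on $t$ and the basepoint. The divergence, and the genuine gap, is in the base. For $M=S^2\times_f S^1$ with a generic warping function $f:S^2\to\R^+$ there is \emph{no} residual base symmetry: an isometry of $g_0$ covering a rotation $\phi$ of the base must satisfy $f\circ\phi=f$, which for generic $f$ forces $\phi=\mathrm{id}$; and even for an $f$ invariant under a circle of rotations, the surviving isotropy at a generic base point is trivial, which only constrains the base metric to stay rotationally symmetric, not round. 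Since the horizontal part of $\rc\left(g(t)\right)$ contains a Hessian term of the warping function that is not proportional to the round metric, the base genuinely leaves the class of round metrics, so your proposed resolution of what you correctly identify as the main obstacle does not work.

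The paper closes this differently: it invokes the uniformization theorem to say that whatever metric the $S^2$ base acquires is conformally equivalent to the round one, and multiplying $g(t)$ by the pulled-back conformal factor returns a warped product of round spheres without changing the Chern--Simons class, which is a conformal invariant; Lemma~\ref{lem:0deriv} is then applied to the rescaled metric. (Whether that fully justifies exactness of the derivative of $TP_1$ along the actual flow at $g(t)$ --- the Ricci tensor, hence $\dot\omega$, is not conformally invariant --- is a question about the paper's own argument, but it is the mechanism your proposal is missing.) Your remaining steps --- applying Lemma~\ref{lem:0deriv} at each time and integrating the exact derivative to conclude constancy of the class in $H^3\left(E,\R\right)$ and hence of its mod-$\Z$ reduction --- match the paper.
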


			\begin{proof}
				Consider the projection $\pi : S^n \times_f S^m \to S^n$. For each $x\in S^n$, the fiber $\pi^{-1}(x)$ is going to be an $m$-sphere with radius determined by $f(x)$. Thus the fibers will be isometric to each other up to a factor determined by $f$. At any time $t$ the, the fiber above a point $x$ is going to have isometry group $SO\left( m \right)$ with isotropy subgroup $SO\left( m-1 \right)$. Thus the fiber will be isometric to a sphere of some radius determined by $t$ and $x\in S^n$.

				If $n=1$, any metric on $S^1$ is clearly conformally equivalent to the round metric. For $n=2$, the uniformization theorem says that any metric on $S^2$ is conformally equivalent to the round metric.  Multiplying the metric by the conformal factor will yield a metric that is a warped product of $S^n$ with $S^{m}$.

				By lemma \ref{lem:0deriv} it follows that the derivative of the form $TP_1\left( \omega(t) \right)$ at any time is exact. Thus the class $\left\{ \widetilde{TP_1\left( \omega \right)} \right\} \in H^{2l-1}\left( E(M), \R/\Z \right)$ is invariant. Therefore the class $\left\{ \widetilde{TP_1\left( \omega \right)} \right\}\in H^3\left( M,\R/\Z \right)$ must also be invariant.
			\end{proof}

			\section{Generalized Berger Spheres}

			Consider the manifold $S^3$ identified with the Lie group $SU(2)$. The identification is from the map that sends $(x,y) \in S^3\subset C^2$  to $\left(\begin{array}{cc} y & x\\ -\bar x & \bar y\end{array}\right)\in SU(2)$. The following is the basis for the Lie algebra $\mathfrak{su}(2)$  of $SU(2)$.
			\begin{equation}
				X_1 = \left(\begin{array}{cc} i & 0 \\ 0 & -i\end{array}\right), X_2=\left(\begin{array}{cc} 0 & 1 \\ -1 & 0\end{array}\right), X_3 = \left(\begin{array}{cc} 0 & i \\ i & 0\end{array}\right).
			\end{equation}

			The basis $\left< X_1,X_2,X_3\right>$ forms an orthonormal basis on $S^3$ under the standard metric. The Lie bracket of these vector fields is $\left[X_i, X_{i+1}\right] = 2X_{i+2}$ where the indices are taken mod 3. 

			\begin{defn}
				\label{dfn:genberger}
				Define a new metric on $S^{3}$, $\bar{g}$, such that $\left\langle \lambda_1^{-1}X_{1},\lambda_2^{-1}X_{2},\lambda_3^{-1}X_{3}\right\rangle $ is an orthonormal basis where $\lambda_{i}$ are constants. For notation, let $\bX_{i}=\lambda_i^{-1}X_{i}$. Additionally, let $\left< \bt^1,\bt^2,\bt^3 \right>$ be the orthonormal coframe. The manifold $S^3$ equipped with this metric is called the \emph{generalized Berger sphere}. If $ \lambda_2 = \lambda_3 = 1$ then this is the standard \emph{Berger collapsed sphere} or \emph{Berger  sphere}.
			\end{defn}

			In this section $i,j,k$ are fixed and distinct from one another, so the Einstein summation convention does \emph{not} apply to these indices. Sums will use the letters $p$ and $q$, which do follow the Einstein summation convention. The terms $\omega_i^j(t)$, $\Omega_i^j(t)$, $R_{ij}(t)$, and $\dot{\omega}_i^j(t)$ will represent the connection 1-forms, curvature 2-forms, Ricci curvature, and the derivative of connection 1-forms, respectively, at time $t$. Finally, $\epsilon_{ijk}$ represents the sign of the permutation $(i j k)$.

			Consider the manifold $M$ which is a generalized Berger sphere with basis and coframe as defined in definition~\ref{dfn:genberger} . Under the Ricci flow, $g_t$ will represent the metric at time $t$ with $g_0 = \overline{g}$. The frame, $\left<\bX_1,\bX_2,\bX_3\right>$, and coframe, $\left<\bt^1,\bt^2,\bt^3\right>$, are not being evolved with time and depend only on the initial parameters, $\lambda_i$.

			Using the Koszul formula, the connection can be computed to be $\nabla_{\bX_i}\bX_j = \varepsilon_{ijk}\frac{-\lambda_i^2 + \lambda_j^2 + \lambda_k^2}{\lambda_i\lambda_j\lambda_k} \bX_k$ and $\nabla_{\bX_i}\bX_i = 0$.
			The connection and curvature forms at time 0 are
			\begin{align*}
				\omega_i^j(0) =& \varepsilon_{kij}\frac{-\lambda_k^2 + \lambda_i^2 + \lambda_j^2}{\lambda_k\lambda_i\lambda_j}\bt^k\\
				\Omega_i^j(0) =& \frac{3\lambda_k^4 - \lambda_i^4-\lambda_j^4 + 2\lambda_i^2\lambda_j^2 - 2\lambda_i^2 \lambda_k^2 - 2\lambda_j^2 \lambda_k^2}{\lambda_i^2\lambda_j^2\lambda_k^2}\bt^i \wedge \bt^j.  
			\end{align*} (For details on these and the following computations, see Appendix \ref{app:computations}).

			The Ricci curvature terms at time 0 are
			\begin{equation*}
				R_{ii}(0) = \frac{2\lambda_i^4 - 2\lambda_k^4 -2\lambda_j^4 + 4\lambda_j^2 \lambda_k^2}{\lambda_i^2 \lambda_j^2 \lambda_k^2}.
			\end{equation*}

			The derivative of the connection forms can be computed directly.
			\begin{align*}
				\partial_{t}\left\langle \nabla_{\bX_{k}}\bX_{i},\bX_{j}\right\rangle = & \partial_{t}\left\langle \omega_{i}^{p}\left(\bX_{k}\right)\bX_{p},\bX_{j}\right\rangle \\
				= & \dot{\omega}_{i}^{p}\left(\bX_{k}\right)\left\langle \bX_{p},\bX_{j}\right\rangle +\omega_{i}^{p}\left(\bX_{k}\right)\partial_{t}\left\langle \bX_{p},\bX_{j}\right\rangle \\
				= & \dot{\omega}_{i}^{j}\left(\bX_{k}\right)\left\langle \bX_{j},\bar{X_{j}}\right\rangle +\omega_{i}^{p}\left(\bX_{k}\right)\partial_{t}\left\langle \bX_{p},\bX_{j}\right\rangle \\
				= & \dot{\omega}_{i}^{j}\left(\bX_{k}\right)-2\omega_{i}^{p}\left(\bX_{k}\right)R_{pj}\\
				= & \dot{\omega}_{i}^{j}\left(\bX_{k}\right)-2\omega_{i}^{j}\left(\bX_{k}\right)R_{jj},
			\end{align*}
			so that
			\begin{equation*}
				\dot{\omega}_{i}^{j}(0)\left(\bX_{k}\right)=\partial_{t}\left\langle \nabla_{\bX_{k}}\bX_{i},\bX_{j}\right\rangle +2\omega_{i}^{j}\left(\bX_{k}\right)R_{jj}.
			\end{equation*}
			At time 0, computation yields

			\begin{equation}
				\label{eqn:omdij}
				\dot{\omega}_{i}^{j}(0) = -2 \varepsilon_{kij}\frac{-\lambda_k^2 + \lambda_i^2 + \lambda_j^2}{\lambda_k\lambda_i\lambda_j} \left( R_{11}+R_{22}+R_{33} \right)\bt^k
			\end{equation}

			Since $\Omega$ is skew-symmetric, the first Pontryagin polynomial reduces to 
			\begin{equation*}
				P_1\left( A\otimes B\right) = -\frac{1}{2\pi^2}\tr\left( AB \right).
			\end{equation*}Thus the derivative at time 0 of $TP_1\left( \omega \right)$ is $-\frac{1}{2\pi^2}\tr\left( \dot{\omega}\wedge\Omega \right) + exact$.

			\begin{lem}
				\label{lem:berger}
				For a Berger sphere, the Chern-Simons class, $\left\{ \widetilde{TP_1\left( \omega \right)} \right\}$, is invariant if and only if $\lambda=1$.
			\end{lem}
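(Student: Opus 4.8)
The plan is to detect the class $\left\{\widetilde{TP_1(\omega)}\right\}$ by the real number $\int_M TP_1(\omega(t))$ (read mod $\Z$), using the global frame to push $TP_1(\omega)$ down to $M=S^3$; the class is flow-invariant exactly when the derivative of this integral vanishes for all $t$. By Proposition~\ref{prop:tpd} with $\ell=2$, $\frac{d}{dt}TP_1(\omega(t))\mid_{t=0}=2P_1\!\left(\dot{\omega}\wedge\Omega\right)+\mathrm{exact}$, and the exact term integrates to zero over the closed manifold $M$. Since $\Omega$ is skew-symmetric its diagonal entries vanish, so $\tr\Omega=0$ and the $\tr A\,\tr B$ part of $P_1$ drops out, leaving $\frac{d}{dt}\int_M TP_1(\omega(t))\mid_{t=0}$ equal to a nonzero constant multiple of $\int_M\tr\!\left(\dot{\omega}\wedge\Omega\right)$. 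Everything therefore reduces to this one integral.

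First I would substitute the Berger values $\lambda_2=\lambda_3=1$, $\lambda_1=\lambda$ into the section's formulas for $\Omega_i^j(0)$, the Ricci terms $R_{ii}(0)$, and the variation $\dot{\omega}_i^j(0)$. I would then form the matrix product $\dot{\omega}\wedge\Omega$ and read off its diagonal: each contribution $\dot{\omega}_i^p\wedge\Omega_p^i$ is a multiple of $\bt^m\wedge\bt^p\wedge\bt^i$, where $m$ is the index complementary to $i$ and $p$, hence a scalar multiple of the left-invariant volume form $\bt^1\wedge\bt^2\wedge\bt^3$. Summing the diagonal gives $\tr\!\left(\dot{\omega}\wedge\Omega\right)=c(\lambda)\,\bt^1\wedge\bt^2\wedge\bt^3$ for an explicit scalar $c(\lambda)$, and because this integrand is a constant multiple of the volume form, $\int_M\tr\!\left(\dot{\omega}\wedge\Omega\right)=c(\lambda)\,\mathrm{Vol}(M)$. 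The derivative of the class is thus zero if and only if $c(\lambda)=0$.

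The bookkeeping here is the main obstacle and must be done carefully, because the frame $\bX_i$ is orthonormal only at $t=0$; consequently $\dot{\omega}(0)$ is \emph{not} skew-symmetric, and all six off-diagonal entries of $\dot{\omega}$ (not merely three) feed into the trace. I expect the many terms to collapse, after tracking signs and the complementary-index pattern, to a single factor of the form $c(\lambda)=(\text{nonzero constant})\cdot\lambda\,(1-\lambda^2)^2$. A strong consistency check on the algebra is that $c(1)=0$: at $\lambda=1$ the metric is round, so the Ricci flow is a homothety and hence a conformal change, and the round-sphere proposition established above already forces the class to be invariant. Establishing that $(1-\lambda^2)$ divides $c(\lambda)$ and that $c$ has no other positive zero is the crux of the argument.

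Granting the factorization, the lemma follows in both directions. If $\lambda=1$ the metric is round and stays round along the flow, so $c\equiv 0$ and, by conformal invariance, $\left\{\widetilde{TP_1(\omega)}\right\}$ is preserved. If $\lambda\neq 1$ then $c(\lambda)\neq 0$, whence $\frac{d}{dt}\int_M TP_1(\omega(t))\mid_{t=0}\neq 0$ and the class is genuinely moving. Therefore $\left\{\widetilde{TP_1(\omega)}\right\}$ is invariant under the Ricci flow if and only if $\lambda=1$.
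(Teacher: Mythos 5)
Your proposal is correct and follows essentially the same route as the paper: reduce $P_1$ to $-\frac{1}{2\pi^2}\tr\left(\dot{\omega}\wedge\Omega\right)$ using $\tr\Omega=0$, compute this trace from the explicit Berger-sphere formulas for $\Omega$ and $\dot{\omega}$, and obtain a nonzero constant multiple of $\lambda\left(\lambda^2-1\right)^2\,\bt^1\wedge\bt^2\wedge\bt^3$, which vanishes precisely at $\lambda=1$, with the $\lambda=1$ case handled by conformal invariance of the round sphere. Your additional step of integrating over the closed manifold $S^3$ to certify that a nonzero constant multiple of the volume form is not exact simply makes explicit what the paper asserts directly.
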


			\begin{proof}
				The derivative of $TP_1\left( \omega \right)$ at time 0 is
				\begin{equation}
					\label{berger}
					\dfrac{d}{dt} TP\left( \omega(t) \right) \mid_{t=0} = -\frac{2}{\pi^2}\lambda\left( \lambda^2-1 \right)^2 \theta^1 \wedge \theta^2 \wedge \theta^3 + exact.
				\end{equation}

				If $\lambda=1$, the Berger sphere is just a 3-sphere. It is already known that the Ricci flow conformally shrinks a 3-sphere to a point \cite[Chapter 1.6]{Chow2004}  For any other value of $\lambda$, however, the Chern-Simons form changes by something that is not exact and therefore the Chern-Simons class $\left\{ TP_1\left( \omega \right) \right\}\in H^3\left( E(M),\R \right)$ varies.  Since the class varies continuously, its reduction mod $\Z$ must also vary and therefore $\left\{ \widetilde{TP\left( \omega \right)} \right\}\in H^{3}\left( M,\R/\Z \right)$  must vary.
			\end{proof}

			The derivative of the Chern-Simons form simplifies to
			\begin{equation}
				\label{3consts}
				\textstyle{\partial_{t}TP_{1}\left(\omega\right)\mid_{t=0} =\frac{16}{\pi^2\lambda_{1}^{5}\lambda_{2}^{5}\lambda_{3}^{5}}\left(\sum\limits_p\lambda_{p}^{10}-\sum\limits_{p\neq q}\lambda_{p}^{8}\lambda_{q}^{2}+\sum\limits_{p}\lambda_{p}^{6}\lambda_{p+1}^{2}\lambda_{p+2}^{2}\right)\bt^{1}\wedge\bt^{2}\wedge\bt^{3}}.
			\end{equation}

			\begin{thm}
				\label{thm:3const}
				For the Berger sphere generalized by three constants, the Chern-Simons class $\left\{ \widetilde{TP_1\left( \omega \right)} \right\}\in H^3\left( M,\R/\Z \right)$ defined by the first Pontryagin polynomial form is invariant only if $\lambda_1=\lambda_2=\lambda_3$.
			\end{thm}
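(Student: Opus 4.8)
The plan is to reduce the statement, exactly as in Lemma~\ref{lem:berger}, to the single question of when the scalar coefficient in \eqref{3consts} vanishes. The derivative $\partial_t TP_1(\omega)\mid_{t=0}$ is a scalar multiple of $\bt^1\wedge\bt^2\wedge\bt^3$ plus an exact form; since $\bt^1\wedge\bt^2\wedge\bt^3$ is the Riemannian volume form of the closed manifold $S^3$, it is not exact (its integral is the positive volume), so the class $\left\{ TP_1(\omega) \right\}\in H^3\left(E(M),\R\right)$ moves nontrivially precisely when this scalar is nonzero. Everything therefore comes down to showing that
\begin{equation*}
Q(\lambda_1,\lambda_2,\lambda_3) = \sum_p \lambda_p^{10} - \sum_{p\neq q}\lambda_p^8\lambda_q^2 + \sum_p \lambda_p^6\lambda_{p+1}^2\lambda_{p+2}^2
\end{equation*}
vanishes if and only if $\lambda_1=\lambda_2=\lambda_3$, the $\lambda_i$ being positive.

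The key step, and the only genuine insight required, is to recognize the bracketed polynomial as an instance of Schur's inequality. Writing $a=\lambda_1^2$, $b=\lambda_2^2$, $c=\lambda_3^2$ and expanding $a^3(a-b)(a-c)=a^5-a^4(b+c)+a^3bc$ cyclically, I would verify the identity
\begin{equation*}
Q = a^3(a-b)(a-c) + b^3(b-a)(b-c) + c^3(c-a)(c-b),
\end{equation*}
where the three symmetric sums $\sum a^5$, $\sum a^4(b+c)$, and $abc\left(a^2+b^2+c^2\right)=\sum a^3bc$ match the three summands of \eqref{3consts} term by term.

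By Schur's inequality this expression is $\geq 0$ for all $a,b,c\geq 0$. To pin down the equality case I would order $a\geq b\geq c>0$ and regroup as
\begin{equation*}
Q = (a-b)\left[a^3(a-c)-b^3(b-c)\right] + c^3(a-c)(b-c),
\end{equation*}
in which both terms are nonnegative under this ordering; with $c>0$ the second forces $a=c$ or $b=c$, and in either case the first then forces $a=b=c$. Hence $Q=0$ exactly when $\lambda_1=\lambda_2=\lambda_3$ and $Q>0$ otherwise. Whenever the $\lambda_i$ are not all equal, \eqref{3consts} thus exhibits $\partial_t TP_1(\omega)\mid_{t=0}$ as a nonzero multiple of the volume form modulo exact forms, so $\left\{ TP_1(\omega) \right\}$ varies continuously in $H^3\left(E(M),\R\right)$; its reduction mod $\Z$ then also varies, and $\left\{ \widetilde{TP_1(\omega)} \right\}\in H^3\left(M,\R/\Z\right)$ fails to be invariant. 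The sole obstacle is spotting the Schur structure; the equality analysis and the passage to the $\R/\Z$ class are routine and parallel Lemma~\ref{lem:berger}.
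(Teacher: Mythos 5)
Your proof is correct, but the algebraic heart is genuinely different from the paper's. After the (shared, and valid) reduction to the vanishing of the coefficient in \eqref{3consts} --- your observation that a nonzero multiple of the volume form $\bt^1\wedge\bt^2\wedge\bt^3$ on a closed oriented manifold cannot be exact is the right way to make that step precise --- the paper proceeds by cases: it disposes of $\lambda_1=\lambda_2=\lambda_3$ and of exactly two equal constants separately (the latter via conformal equivalence to a Berger sphere and Lemma \ref{lem:berger}), then for three distinct constants uses conformal invariance to normalize $\lambda_3=1$, reducing to a two-variable polynomial $F(\alpha,\beta)$ with $\alpha,\beta>1$, whose positivity is read off from two explicit factorizations. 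You instead keep all three variables, substitute $a=\lambda_i^2$, and recognize the coefficient as the degree-$3$ Schur expression $\sum a^3(a-b)(a-c)$; I checked the expansion $a^3(a-b)(a-c)=a^5-a^4(b+c)+a^3bc$ and the three cyclic sums do match the three summands of \eqref{3consts}, and your regrouping $(a-b)\left[a^3(a-c)-b^3(b-c)\right]+c^3(a-c)(b-c)$ under $a\geq b\geq c>0$ correctly pins the equality case to $a=b=c$ (the degenerate Schur equality case, two equal and one zero, is excluded since the $\lambda_i$ are positive). What your route buys: a single symmetric argument with no case split, no appeal to Lemma \ref{lem:berger}, and an identification of the obstruction with a classical nonnegative symmetric function. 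What the paper's route buys: the conformal normalization is geometrically natural (it exploits the conformal invariance of the class that the paper has already emphasized), and the explicit factorizations make positivity visible by inspection once $\alpha,\beta>1$. Both are complete; yours is arguably cleaner.
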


			\begin{proof}
				If $\lambda_1=\lambda_2=\lambda_3$, the generalized Berger sphere is just a 3-sphere of a different radius.  This changes conformally and the Chern-Simons forms are conformal invariants. If exactly two of the constants are the same, then this the manifold is conformally equivalent to a Berger sphere, which was dealt with in corollary \ref{lem:berger}.

				Let $g$ be the metric such that $\left< \lambda_1^{-1} X_1, \lambda_2^{-1}X_2,\lambda_3^{-1}X_3 \right>$ is an orthonormal basis of vector fields.  Without loss of generality, assume that $\lambda_1^{-1} < \lambda_2^{-1} < \lambda_3^{-1}$. This metric is conformally related to the metric, $\hat{g}$, such that $\left< \lambda_3\lambda_1^{-1}X_1, \lambda_3\lambda_2^{-1}X_2,X_3 \right>$ is an orthonormal basis. 

				Let $\alpha=\lambda_3^{-1}\lambda_1$ and $\beta = \lambda_3^{-1}\lambda_2$. It is clear that $\alpha >1$ and $\beta >1$. And so the numerator of the coefficient of \ref{3consts} reduces to
				\begin{equation}
					F\left(\alpha,\beta\right) = \alpha^{10} + \beta^{10} - \alpha^8 \beta^2 - \beta^8\alpha^2  - \alpha^8 - \beta^8 + \alpha^6 \beta^2 + \beta^6 \alpha^2 + \alpha^2 \beta^2 - \alpha^2 - \beta^2 +1.
				\end{equation}
				This can be factored into either of the following.
				\begin{align}
					& \left( \alpha^2-\beta^2 \right)^2\left( \left( \alpha^2-1 \right)\left( \alpha^4 + \alpha^2\beta^2 + \beta^4 \right) + \beta^6 \right) + \left( \alpha^2-1 \right)\left( \beta^2-1 \right)\\
					& \left( \alpha^2-\beta^2 \right)^2\left( \left( \beta^2-1 \right)\left( \beta^4 + \beta^2\alpha^2 + \alpha^4 \right) + \alpha^6 \right) + \left( \alpha^2-1 \right)\left( \beta^2-1 \right).
				\end{align}
				Since $\alpha > 1$ and $\beta > 1$, this coefficient is clearly positive.

				Thus, $F\left( \alpha,\beta \right) = 0 $ if and only if $\alpha = \beta = 1$. Otherwise the class $\left\{ TP_1\left( \omega \right) \right\}\in H^3\left( E(M),\R \right)$ varies continuously and so does its reduction mod $\Z$. Therefore $\left\{ \widetilde{TP_1\left( \omega \right)} \right\}\in H^3\left( M, \R/\Z \right)$ varies.
			\end{proof}

			\appendix
			\section{Computations for Section 3}
			\label{app:computations}

			As in section 3, $i,j,k$ are fixed and distinct. Sums in this section will use the letters $p$ and $q$, which do follow the Einstein summation convention. All computations here occur at time 0.

			Since $\left[\bX_i,\bX_j\right] = 2\epsilon_{ijk} \lambda_i^{-1}\lambda_j^{-1} \bX_k$, the Koszul formula gives
			\begin{align*}
				\nab{\bX_i}{\bX_j}{\bX_k} =& \frac12 \koszr{\bX_i}{\bX_j}{\bX_k}\\
				=& \frac{\epsilon_{ijk}}{2} \left(2\lambda_i^{-1}\lambda_j^{-1} \left<X_k,\bX_k\right> - 2\lambda_j^{-1}\lambda_k^{-1} \left<X_i,\bX_i\right> + 2\lambda_k^{-1}\lambda_i^{-1}\left<X_j,\bX_j\right>\right)\\
				=& \epsilon_{ijk}\left(\frac{\lambda_k}{\lambda_i\lambda_j} - \frac{\lambda_i}{\lambda_j\lambda_k} + \frac{\lambda_j}{\lambda_i\lambda_k}\right)\\
				=& \epsilon_{ijk}\left(\frac{-\lambda_i^2+\lambda_j^2 +\lambda_k^2}{\lambda_i\lambda_j\lambda_k}\right).
			\end{align*}

			Thus
			\begin{align*}
				\nabla_{\bX_i} \bX_i =& 0.\\
				\nabla_{\bX_i} \bX_j =& \epsilon_{ijk}\left(\frac{-\lambda_i^2+\lambda_j^2 +\lambda_k^2}{\lambda_i\lambda_j\lambda_k}\right)\bX_k.
			\end{align*}

			The connection form us computed using $\na{k}\bX_i = \omega_i^p \left(\bX_k\right)\bX_p$. 
			\begin{equation*}
				\omega_i^j\left(\bX_k\right) = \epsilon_{kij}\left( \frac{-\lambda_k^2 + \lambda_i^2 + \lambda_j^2}{\lambda_i\lambda_j\lambda_k} \right).
			\end{equation*} Thus
			\begin{equation*}
				\omega_i^j = \epsilon_{ijk} \left(\frac{\lambda_i^2 + \lambda_j^2 - \lambda_k^2}{\lambda_i\lambda_j\lambda_k}\right) \bt^k
			\end{equation*}

			Using the formula $R\left(\bX_i,\bX_j\right)\bX_k = \Rm{\bX_i}{\bX_j}{\bX_k}$ the Riemannian curvature tensor is calculated as

			\begin{align*}
				R\left(\bX_i,\bX_j\right)\bX_i =& \Rm{\bX_i}{\bX_j}{\bX_i}\\
				=& \epsilon_{jik}\left(\frac{-\lambda_j^2 + \lambda_i^2 + \lambda_k^2}{\lambda_i\lambda_j\lambda_k}\right) \na{i}\bX_k -2\epsilon_{ijk}\frac{\lambda_k}{\lambda_i\lambda_j} \na{k}\bX_i\\
				=& \epsilon_{jik}\epsilon_{ikj}\left(\frac{-\lambda_j^2 + \lambda_i^2 + \lambda_k^2}{\lambda_i\lambda_j\lambda_k}\right)\left(\frac{-\lambda_i^2 + \lambda_k^2 + \lambda_j^2}{\lambda_i\lambda_j\lambda_k}\right)\bX_j\\
				&- 2\epsilon_{ijk}\epsilon_{kij}\frac{\lambda_k}{\lambda_i\lambda_j}\left(\frac{-\lambda_k^2 + \lambda_i^2 + \lambda_j^2}{\lambda_i\lambda_j\lambda_k}\right) \bX_j\\
				=& \frac{(-\lambda_j^2+\lambda_i^2\lambda_k^2)(-\lambda_i^2+\lambda_k^2+\lambda_j^2)-2\lambda_k^2(-\lambda_k^2+\lambda_i^2+\lambda_j^2)}{\lambda_i^2\lambda_j^2\lambda_k^2} \bX_j
			\end{align*}

			The other computations are similar. Combining these with the formula $R_{ii}= {R_{pii}}^p$, the Ricci curvature terms are

			\begin{equation*}
				R_{ii} = 2\left(\frac{\lambda_i^4 -\lambda_j^4 - \lambda_k^4  + 2\lambda_j^2\lambda_k^2}{\lambda_i^2\lambda_j^2\lambda_k^2}\right)
			\end{equation*}

			The curvature form, $\Omega$, is computed using $\Omega_i^j =\frac12 {R_{pqi}}^j \bt^p\wedge\bt^q$ and is
			\begin{equation*}
				\Omega_i^j = \frac{\lambda_i^4 +\lambda_j^4 - 3\lambda_k^4 - 2\lambda_i^2\lambda_j^2  + 2\lambda_i^2\lambda_k^2+2\lambda_j^2\lambda_k^2}{\lambda_i^2\lambda_j^2\lambda_k^2}\bt^i\wedge\bt^j
			\end{equation*}

			\bibliography{paper001}{}
			\bibliographystyle{amsplain}
		\end{document}